\def\cM{{\mathcal{M}}} \def\cN{{\mathcal{M}^{*}}}
\def\d{{\delta}}
  \def\({{\left(}} \def\){{\right)}}
  \def\[{{\left[}} \def\]{{\right]}}
\newcommand\tp{\operatorname{tp}}
\renewcommand\geq{\geqslant}
\renewcommand\leq{\leqslant}
\newcommand\std{\operatorname{st}}
\theoremstyle{plain}
\newtheorem{thm}{Theorem}
\newtheorem{cor}[thm]{Corollary}
\newtheorem{lem}[thm]{Lemma}
\newtheorem{prp}[thm]{Proposition}
\theoremstyle{definition}
\theoremstyle{remark}
\begin{document}

\title[A Simple Proof of the Marker-Steinhorn Theorem]{A Simple Proof of the Marker-Steinhorn Theorem For Expansions of Ordered Abelian Groups}
\author{Erik Walsberg}
\address{University of California, Los Angeles\\
Los Angeles, CA 90955\\
U.S.A.}
\email {erikw@math.ucla.edu}
\date{\today}

\begin{abstract} We give a short and self-contained proof of the Marker-Steinhorn Theorem
for o-minimal expansions of ordered groups, based on an analysis of linear orders definable in such structures. \end{abstract}

\maketitle

\section{Introduction}
\noindent
Let $\cM=(M,{\leqslant},\dots)$ be a dense linear order without endpoints, possibly with additional structure, in the language $\mathcal{L}$. 
A type $p(x)$ over $M$ is said to be {\it definable}\/ if for every $\mathcal{L}$-formula
$\delta=\delta(x,y)$ in the (object) variables $x=(x_1,\dots,x_m)$ and (parameter) variables $y=(y_1,\dots,y_n)$,
there is a  defining formula  for the restriction $p\!\upharpoonright\!\delta$ of $p$ to $\delta$, i.e., a formula $\phi(y)$,
possibly with parameters from $M$, such that $\delta(x,b)\in p \Longleftrightarrow \cM\models \phi(b)$, for all $b\in M^{n}$. 
The Marker-Steinhorn Theorem alluded to in the title of this note gives a condition for certain types over $M$ to be definable, provided that $\cM$ is o-minimal.

To explain it, we first recall that a set $C\subseteq M$ is said to be a {\it cut}\/ in $\cM$ if whenever $c\in C$, then $(-\infty,c):=\{a\in M:a < c\}$ is contained in $C$. Let $\d(x,y)$ be the formula $x>y$ (in the language of $\cM$). It is well known that cuts in $\cM$ correspond in a one-to-one way to complete $\d$-types over $M$, where to the cut $C$ in $\cM$ we  associate the complete $\d$-type 
$$p_C(x):=\{\d(x,b):b\!\in\!C\}\cup\{\neg\d(x,b):b\in M\setminus C\}$$ 
over $M$. 
The $\d$-type $p_C$ is definable if and only if the cut $C$ in $\cM$ is definable (as a subset of $M$). 
If $C$ is of the form $(-\infty,c]:=\{a\in M:a\leqslant c\}$ ($c\in M$) or $(-\infty,c)$ ($c\in M\cup\{\pm\infty\}$), then $C$ clearly is definable. Cuts of this form are said to be {\it rational.}\/ The structure $\cM$ is definably connected if and only if all definable cuts are rational. If $(M,{\leqslant})=(\mathbb R,{\leqslant})$ is the  real line with its usual ordering, then all cuts in $\cM$ are rational. This can be used to define the standard part map for elementary extensions. That is, if $(M,{\leqslant})=(\mathbb R,{\leqslant})$ and $\cM\preceq\cN=(M^*,{\leqslant},\dots)$, then we can define a map   
$$b\mapsto \sup\{a\in M: a\leqslant b\}\colon M^*\cup\{\pm\infty\}\longrightarrow M\cup\{\pm\infty\},$$ 
where we declare $\sup \emptyset:=-\infty$ and $\sup M:=+\infty$. To generalize this, we say that an elementary extension $\cM\preceq\cN$ is {\it tame}\/ if for every $a\in M^{*}$ the cut $\{b\in M:b\leqslant a\}$ is rational.
(Thus if  $(M,{\leqslant})$ is the usual ordered set of reals, then every elementary extension of $\cM$ is tame.)
 We can then define a standard part map in the same way.
\\

Now $\cM$ is o-minimal if and only if every $1$-type over $M$ is determined by its restriction to $\d$, in which case a $1$-type over $M$ is definable exactly when the associated cut in $\cM$ is rational. It trivially follows that $\cM\preceq\cN$ is tame if and only if for every $a\in M^*$, the type $\tp(a|M)$ is definable. Marker and Steinhorn~\cite{Marker-Steinhorn} generalized this to show that if $\cM$ is o-minimal and $\cM\preceq\cN$ is tame then for every $a\in (M^*)^m$, the type $\tp(a|M)$ is definable. In particular, if $\cM$ is a structure on the real line, then every type over $M$ is definable. 
See \cite{van den Dries 1} for a survey of geometric applications of this very useful result.
The original proof of Marker and Steinhorn uses a complicated inductive proof. Tressl~\cite{Tressl} proved the Marker-Steinhorn theorem for o-minimal expansions of real closed fields with a short and clever argument. His proof gives little idea as to the form of the defining formulas of a type. Chernikov and Simon have given a proof using NIP-theoretic machinery~\cite{CS}. We give a short proof of the Marker-Steinhorn Theorem for o-minimal expansions of ordered groups. 
 The crucial idea behind our proof is to reduce the analysis of $n$-types to an analysis of cuts in definable linear orderings.
 Our main tool is Proposition~\ref{prp:Janak}, a result about linear orders definable in o-minimal structures admitting elimination of imaginaries. This result is essentially due to Ramakrishnan~\cite{Ramakrishnan}, which is closely related to earlier work of Onshuus-Steinhorn~\cite{Onshuus-Steinhorn}. For the sake of completeness we provide a proof.
\\

By carefully tracking the parameters used to define the type, we actually obtain a uniform version of the Marker-Steinhorn theorem.  The  pair $(\cN,\cM)$ is the structure that consists of $\cN$ together with a unary predicate for the underlying set of $\cM$ and a unary function symbol for the restriction of the standard part map $\std$ to the convex hull of $M$ in $M^*$. The expanded language is called $\mathcal{L}^{*}$. 
We denote by $\mathcal{L}(M)$ the expansion of $\mathcal L$ by constant symbols naming each
element of $M$, and similarly with $\mathcal{L}^*$ in place of $\mathcal L$.
We show that if $\delta(x,y)$ is an  $\mathcal{L}$-formula then there is an $\mathcal{L}(M)$-formula $\phi(z,y)$ and an $\mathcal{L}^{*}(M)$-definable map $\Omega$, taking values in a cartesian power of $M$, such that for any tuples $a$ in $M^*$ and $b$ in $M$ of appropriate lengths,
$$
\cN\models\delta(a,b)\quad\Longleftrightarrow\quad \cM\models\phi\big(\Omega(a),b\big).
$$
We will prove this by induction on the length of $a$.
See Proposition~\ref{prp:main prp} below for a precise statement and the proof.

\subsection*{Conventions}
We let $m$,~$n$ and $k$ range over the set $\mathbb N=\{0,1,2,\dots\}$ of natural numbers. 
Given sets $A$, $B$ and $C\subseteq A\times B$, as well as $a\in A$ and $b\in B$, we let 
$$C_{a}=\{b\in B:(a,b)\in C\}, \qquad C^{b}=\{a\in A:(a,b)\in C\}.$$ 
Throughout the paper, $\cM$ is an o-minimal expansion of a dense linear order without endpoints, admitting elimination of imaginaries, and $\cM\preceq\cN$ is a tame extension. 
If $A\subseteq M^{m}$ is a definable set, then $A^{*}$ denotes the subset of $(M^*)^m$ defined  in $\cN$ by the same formula. (Since $\cM\preceq\cM^*$, this does not depend on the choice of defining formula.) Similarly, if $f\colon A\to M^n$, $A\subseteq M^m$, is a definable map, then $f^*\colon A^*\to (M^*)^n$ denotes the map whose graph is defined in $\cN$ by the same formula as the graph of $f$.
Unless said otherwise, ``definable'' means ``definable, possibly with parameters,'' and
the adjective ``definable''  applied to subsets of $M^m$ or maps $A\to M^n$, $A\subseteq M^m$, will mean ``definable in $\cM$.'' Let $A,B\subseteq M^m$ be definable.
By $\dim(A)$ we denote the usual o-minimal dimension of~$A$. 
If $A\subseteq B$, then we say that 
$A$ is  {\it almost all}\/  of $B$  if $\dim(B\setminus A)<\dim(B)$, and we say that a property of elements of $B$ is true of {\it almost all $b\in B$}\/ if it holds on a definable subset of $B$ which is almost all of $B$. Let $\sim$ be a definable equivalence relation on $A$. Then 
for $a\in A$ we let $[a]_{\sim}$ denote the $\sim$-class of $a$, and
we let 
$$A/{\sim}:=\big\{[a]_{\sim}:a\in A\big\}$$ 
be the set of equivalence classes of $\sim$. We tacitly assume that
(by elimination of imaginaries) we are given a definable set $S\subseteq A$ of representatives of $\sim$, and identify $S$ with $A/{\sim}$. The basic facts about o-minimal structures that we use can be found in~\cite{van den Dries 2}. 
If $\mathcal M$ expands an ordered abelian group,
given a bounded definable $A\subseteq M$ we let $\mu(A)$ be the sum of the lengths of the components of $A$. If $A\subseteq M^{m}\times M$ is such that every $A_x$ is bounded then there is a definable $f\colon M^{m}\longrightarrow M$ such that $f(x)=\mu(A_x)$. We call $\mu(A)$ the {\it measure}\/ of $A$. (Indeed, $\mu$ is a finitely additive measure on the collection of bounded definable subsets of $M$.)

\subsection*{Acknowledgments}
We thank Matthias Aschenbrenner for suggesting the topic, for many useful discussions on the topic,  and for finding a serious gap in the first version of the proof. We also thank David Marker for his comments on an earlier version of the proof.

\section{Definable Linear Orders}

\noindent
In this section we establish a key result about definable linear orders in~$\cM$. As mentioned earlier, this fact is a very weak version of a result due to Ramakrishnan (related to earlier work of Onshuus-Steinhorn). It can fairly easily be proved directly; for sake of completeness, and since we also need to investigate the uniformities in the construction, we include a proof.

We fix a definable  linear order $(P,\leqslant_P)$; i.e., $P$ is a  definable subset of~$M^m$, for some~$m$,
and $\leqslant_P$ is a definable binary relation on $P$ which is a linear ordering (possibly with endpoints). Sometimes we suppress $\leqslant_P$ from the notation. 
We let $a$, $b$, $c$ range over $P$.
A map $\rho \colon P\longrightarrow Q$, where $Q$ is a definable linear order, is said to be monotone if $a\leqslant_P b\Rightarrow \rho(a)\leqslant_Q\rho(b)$, for all $a$,~$b$.

\begin{prp}\label{prp:Janak}
Suppose $l=\dim(P)\geqslant 2$. Then there is a definable linear order $(Q,\leqslant_{Q})$ such that $\dim(Q)=l-1$ and a definable surjective monotone map $\rho\colon P\longrightarrow Q$ all of whose fibers have dimension at most $1$.
\end{prp}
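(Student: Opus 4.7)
My plan is to construct $\rho$ from the definable equivalence relation
\[
a \sim b \;\Longleftrightarrow\; \dim[\min_{\leq_P}(a,b),\,\max_{\leq_P}(a,b)]_P \leq 1.
\]
Transitivity follows from the inclusion $[a,c]_P \subseteq [a,b]_P \cup [b,c]_P$ whenever $a \leq_P b \leq_P c$, together with subadditivity of dimension under unions; reflexivity and symmetry are clear. Since $\leq_P$ is linear, every $\sim$-class is $\leq_P$-convex. Invoking elimination of imaginaries, I pick a definable transversal $Q \subseteq P$ for $\sim$; convexity of the classes makes $\leq_P$ descend to a definable linear order $\leq_Q$ on $Q$, so the quotient map $\rho\colon P \to Q$ is definable, surjective, and monotone.

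The heart of the proof is the dimension bound $\dim K \leq 1$ for each $\sim$-class $K$. Suppose for contradiction $\dim K \geq 2$, fix $a_0 \in K$, and set $K^+ := \{c \in K : c >_P a_0\}$ and $K^- := \{c \in K : c <_P a_0\}$; since $K = K^- \cup \{a_0\} \cup K^+$, additivity of dimension lets us assume $\dim K^+ = 2$. The definable family $\{[a_0,b]_P\}_{b \in K^+}$ is nested under $\leq_P$, and each member has dimension at most $1$ by the definition of $\sim$. Its union $U := \bigcup_{b \in K^+}[a_0,b]_P$ contains $K^+$, so $\dim U \geq 2$. On the other hand, applying the o-minimal fibre-dimension theorem to the two projections of the graph $W := \{(b,x) \in K^+ \times P : x \in [a_0,b]_P\}$ gives $\dim W \leq \dim K^+ + 1 = 3$; the fibre of the second projection $W \to P$ over a generic $x \in U$ is the upper $\leq_P$-interval $\{b \in K^+ : b \geq_P x\}$ of $K^+$, which is $2$-dimensional for generic $x$ because $K^+$ is itself $2$-dimensional and $\leq_P$-convex. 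Hence $\dim U \leq 3 - 2 = 1$, a contradiction. The most delicate point is verifying that this generic upward interval is indeed $2$-dimensional; this is where I anticipate the main technical work, which reduces to a short analysis of how $\leq_P$-convex sets split as ``below $x$'', ``$\{x\}$'', ``above $x$'' for a generic point.

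With every fibre of $\rho$ of dimension at most $1$, additivity along $\rho$ yields $\dim Q \geq l - 1$. For the reverse inequality $\dim Q \leq l-1$ it suffices to rule out the degenerate case in which the set $\{a \in P : [a]_\sim = \{a\}\}$ has dimension $l$: this would force every nontrivial $\leq_P$-interval on a dense open definable subset of $P$ to have dimension at least $2$, contradicting $l \geq 2$ via a short dimension-theoretic analysis of the piecewise constant function $(a_1,a_2) \mapsto \dim[a_1,a_2]_P$ on $P \times P$. Monotonicity and surjectivity of $\rho$ are inherent in the construction, completing the plan.
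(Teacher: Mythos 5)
Your equivalence relation is, up to the open/closed interval convention, the same one the paper uses, and your argument that each class $K$ has dimension at most $1$ does work: since $K$ is convex and $\dim[a_0,b]_P\leq 1$ for every $b\in K$, the set $\{b\in K^+:b\leq_P x\}$ is at most one-dimensional for each $x\in K^+$, so every fibre of the second projection of $W$ over a point of $K^+$ has dimension $\dim K^+$; comparing the two projections then gives $2\dim K^+\leq\dim W\leq\dim K^++1$, forcing $\dim K^+\leq 1$ (and likewise for $K^-$). This is a legitimate alternative to the paper's route, which instead proves that every definable linear order contains an interval of full dimension and applies that to $K$ itself.

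The genuine gap is at the final inequality $\dim Q\leq l-1$. You correctly reduce it to showing that the set of points with finite $\sim$-class has dimension less than $l$, but the assertion that this follows from ``a short dimension-theoretic analysis of the piecewise constant function $(a_1,a_2)\mapsto\dim[a_1,a_2]_P$'' is not substantiated, and no purely dimension-theoretic argument will close it: the hypothesis ``every interval between distinct points of an $l$-dimensional set has dimension at least $2$'' is consistent with all the identities that function satisfies (monotonicity in each variable, $\dim[a,c]_P=\max\big(\dim[a,b]_P,\dim[b,c]_P\big)$ for $a\leq_P b\leq_P c$, the fibre-dimension theorem), so no contradiction with $l\geq 2$ can be extracted from them alone. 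This is in fact the hardest step of the paper's proof and the only genuinely topological one: the paper first shows, using definable connectedness of a cell minus a point, that the set of $a$ for which both rays $(-\infty,a]_P$ and $[a,+\infty)_P$ are closed in $M^m$ is at most one-dimensional, and then analyzes the boundary of the graph of $\leq_P$ in $P\times P$ to show that, outside a set of dimension less than $l$, every point with a singleton class has both rays closed. Some input of this topological kind is unavoidable, and your sketch does not supply it. A secondary omission: you treat only singleton classes, whereas finite classes with several elements also produce zero-dimensional fibres; the paper disposes of these by first quotienting by the relation ``lie in a common maximal finite interval,'' so that all intervals of $P$ may be assumed infinite and any convex set with two points is infinite.
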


We first reduce the proof of this proposition to constructing a certain definable equivalence relation on $P$. Suppose that $\sim$ is a definable equivalence relation on $P$ whose equivalence classes are convex (with respect to $\leqslant_P$), have dimension at most~$1$, and for almost all $a$, $[a]_{\sim}$ is infinite. The first condition ensures that the linear order on $P$ pushes forward to a definable linear order $\leqslant_Q$ on $Q:=P/{\sim}$, so that the quotient map  $\rho\colon P\longrightarrow P/{\sim}$ becomes monotone. The third condition ensures that $\dim(P/{\sim})=l-1$. Then $\rho\colon P\longrightarrow Q$   satisfies the conditions of Proposition~\ref{prp:Janak}.
\\
\\
If all intervals 
$$(a,b)_P := \{c: a <_P c <_Pb\}\qquad (a<_Pb)$$
in $P$ are infinite then a convex subset of $P$ is infinite if it has at least two elements. The next lemma allows us to assume that all intervals in $P$ are infinite.

\begin{lem}
There is a definable surjective monotone map $P\longrightarrow R$ to a definable linear order $R$ in which all intervals are infinite and $\dim(P)=\dim(R)$.
\end{lem}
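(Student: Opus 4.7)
My plan is to realize $R$ as a quotient $P/{\sim}$ by a definable equivalence relation that collapses each maximal ``discrete chunk'' of $P$ to a single point. Declare $a\sim b$ to mean that the closed $\leq_P$-interval $\{c\in P:\min_P(a,b)\leq_P c\leq_P\max_P(a,b)\}$ is finite; since in an o-minimal structure the set of parameters at which a definable family has a finite fiber is itself definable, $\sim$ is a definable binary relation. Reflexivity and symmetry are immediate. For transitivity, if $a\sim b$ and $b\sim c$, then the closed $P$-interval between any two of $\{a,b,c\}$ is contained in the union of the two finite intervals witnessing $a\sim b$ and $b\sim c$, hence is finite. This same observation shows that $\sim$-classes are $\leq_P$-convex, so $\leq_P$ descends to a definable linear order $\leq_R$ on $R:=P/{\sim}$; by elimination of imaginaries we realize $\rho\colon P\to R$ as a definable monotone surjection.

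The crucial step, where o-minimality genuinely enters, is to show that every $\sim$-class is finite. The definable family of closed intervals $\{[\min_P(a,b),\max_P(a,b)]_P:a\sim b\}$ consists entirely of finite sets, so by the uniform finiteness theorem for o-minimal structures there is $N\in\mathbb N$ bounding the cardinalities of its members. If a class $C$ contained $N+2$ elements $a_0<_P\cdots<_P a_{N+1}$, then the interval $[a_0,a_{N+1}]_P$ would contain all of them, contradicting $|[a_0,a_{N+1}]_P|\leq N$ (noting that $a_0\sim a_{N+1}$ by convexity). Hence $|C|\leq N+1$ for every class $C$.

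Because $\rho$ then has finite fibers of uniformly bounded size, $\dim R=\dim P$ by standard o-minimal dimension theory. To verify that every nonempty interval in $R$ is infinite, take $C_1<_R C_2$ in $R$ and set $a:=\max_P C_1$ and $b:=\min_P C_2$, which exist because the classes are finite. Then $a\not\sim b$, so the open interval $(a,b)_P$ is infinite. No $c\in(a,b)_P$ can belong to $C_1$ (it would exceed $\max_P C_1$) or to $C_2$, so each such $c$ lies in a class strictly between $C_1$ and $C_2$ in $R$; since each intermediate class has at most $N+1$ elements, the infinite set $(a,b)_P$ must meet infinitely many such classes, giving the desired conclusion. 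The one nontrivial step is the finiteness of the classes, resting entirely on uniform finiteness; everything else is routine verification.
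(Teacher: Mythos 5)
Your proof is correct and follows essentially the same route as the paper: your relation $a\sim b$ (the closed interval $[a,b]_P$ is finite) coincides with the paper's $\sim_f$ (membership in the same maximal finite interval), and both arguments hinge on uniform finiteness to bound the classes, then observe that a finite-to-one quotient preserves dimension and that an infinite interval must meet infinitely many classes. No gaps.
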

\begin{proof}
Take $N\in\mathbb N$ such that if $(a,b)_P$ is finite then $|(a,b)_P|<N$. Every finite interval in $P$ is contained in a maximal finite interval. Define $a\sim_{f}b$ if $a$ and $b$ are contained in the same maximal finite interval. This is a definable equivalence relation on $P$ with convex equivalence classes. For each $a$, $[a]_{\sim_{f}}$ is a finite interval and so has cardinality strictly less than~$N$. Let $R=P/{\sim_{f}}$, equipped with the definable linear order making the natural projection $P\longrightarrow R$ monotone. As the quotient map is finite-to-one, $\dim(R)=\dim(P)$. Suppose $[a]_{\sim_{f}}$,~$[b]_{\sim_{f}}$ are distinct elements of $R$ with $a<_Pb$. Then $(a,b)_P$ is infinite and so contains infinitely many $\sim_{f}$-classes. Thus $([a]_{\sim_{f}},[b]_{\sim_{f}})_R$ is infinite.
\end{proof}

If $P\longrightarrow R$ is as in the previous lemma, and if we have a map $\rho\colon R\longrightarrow Q$ which satisfies the conditions on $\rho$ in Proposition~\ref{prp:Janak} with $R$  replaced by~$P$, then the composition of $\rho$ with the map $P\longrightarrow R$  satisfies the conditions on~$\rho$ in Proposition~\ref{prp:Janak}. {\it We henceforth assume that all intervals in $P$ are infinite.}\/  
\\
\\
We now define the required equivalence relation. Let $d\leq l$ be a natural number.
We say that $a\sim_{d} b$ if $\dim\,(a,b)_P<d$.  It is very easy to see that $\sim_{d}$ is a definable equivalence relation on $P$, and even easier to see that its equivalence classes are convex. Lemma~\ref{lem:contains l-dim interval} below will be used to show that $\dim\, [a]_{\sim_{d}}<d$ for all $a$. It is more difficult to show that almost all $[a]_{\sim_{d}}$ are infinite.  Our desired equivalence relation is~$\sim_{1}$; we will show that the quotient map $P\longrightarrow P/{\sim_{1}}$ satisfies the conditions of Proposition~\ref{prp:Janak}. This proof uses Lemma~\ref{lem:one-dim}.

\begin{lem}\label{lem:contains l-dim interval}
The ordered set $P$ contains an $l$-dimensional interval.
\end{lem}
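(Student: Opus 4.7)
The plan is to fix any $a \in P$ for which the upper set $U_a := \{c \in P : a <_P c\}$ has dimension $l$, realize each interval $(a,b)_P$ as a fiber of the first-coordinate projection on
\[
F := \{(b,c) \in U_a \times U_a : c <_P b\},
\]
and apply the o-minimal fiber dimension formula to force some such fiber to have dimension $l$.

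First I would argue that such an $a$ exists (assuming $l \geq 1$; the case $l = 0$ is vacuous under the standing assumption that every nonempty interval in $P$ is infinite). For any $a \in P$, the partition $P = L_a \sqcup \{a\} \sqcup U_a$, where $L_a := \{c \in P : c <_P a\}$, forces $\max(\dim L_a, \dim U_a) = l$, and both the hypothesis and conclusion of the lemma are invariant under reversing $\leq_P$, so after that reversal we may assume $\dim U_a = l$.

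Next I would verify that $\dim F = 2l$. Since $\dim U_a = l$, the product $U_a \times U_a$ has dimension $2l$ and decomposes as the disjoint union of $F$, the reflection $F' := \{(b,c) \in U_a \times U_a : c >_P b\}$, and the diagonal $\Delta := \{(b,b) : b \in U_a\}$. The involution $(b,c) \mapsto (c,b)$ maps $F$ bijectively onto $F'$, so $\dim F = \dim F'$; combined with $\dim \Delta = l < 2l$, this forces $\dim F = 2l$.

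Finally, I would project $F$ onto the first coordinate. The standing assumption forces $U_a$ to have no minimum (otherwise the interval between $a$ and that minimum would be empty, contradicting infiniteness), so the projection surjects onto $U_a$, and the fiber over $b \in U_a$ is exactly $U_a \cap \{c : c <_P b\} = (a,b)_P$. The o-minimal fiber dimension formula then gives
\[
2l \;=\; \dim F \;=\; \max_d\bigl(\dim\{b \in U_a : \dim(a,b)_P = d\} + d\bigr).
\]
If every interval $(a,b)_P$ had dimension strictly less than $l$, each summand on the right would be bounded by $(l-1) + l = 2l - 1$, contradicting $\dim F = 2l$. Hence some $(a,b)_P$ has dimension $l$. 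The only step with any subtlety is the evaluation $\dim F = 2l$ via the symmetry trick above; beyond that, the argument is a direct application of the product and fiber dimension theorems for o-minimal structures.
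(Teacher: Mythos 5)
Your proof is correct and is essentially the paper's argument: both show that the definable set of $<_P$-ordered tuples has full dimension in a cartesian power of $P$ (you via the swap involution plus the diagonal on pairs in $U_a\times U_a$, the paper via a finite-to-one sorting map on triples of distinct elements) and then apply the o-minimal fiber dimension formula to a projection whose fibers are exactly the intervals $(a,b)_P$. The only structural difference is that you first anchor a left endpoint $a$ with $\dim U_a=l$ (after possibly reversing the order) and work with pairs, whereas the paper works with unanchored triples and extracts both endpoints from the fiber lemma; this is a cosmetic variation, and your appeal to the standing assumption (nonemptiness of intervals, to get surjectivity of the projection) is in fact superfluous once $\dim F=2l$ has been established by the symmetry argument.
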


\begin{proof}
Let $D\subseteq P^{3}$ be the set of triples in $P^3$ with pairwise distinct components. 
Then clearly $\dim(D)=3l$.
Let $E$ be the set of $(a,b,c)\in D$ such that $a<_P b <_P c$. Let $f\colon D\longrightarrow D$ be the map given by $f(a_{1},a_{2},a_{3})=(a_{\sigma(1)},a_{\sigma(2)},a_{\sigma(3)})$, 
where $\sigma$ is the permutation of $\{1,2,3\}$ such that
$a_{\sigma(1)}<_Pa_{\sigma(2)}<_Pa_{\sigma(3)}$. Clearly $f$ is finite-to-one, and $E=f(D)$, so $\dim(E)=3l$. 
Since $$E=\bigcup_{a,b} \,\{a\}\times (a,b)_P \times\{b\},$$
there are $a$, $b$ such that $\dim\,(a,b)_P=l$.
\end{proof}

\begin{cor}
$\dim\,[a]_{\sim_{d}}<d$ for all $a$.
\end{cor}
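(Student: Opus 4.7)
The plan is to derive this as an almost immediate consequence of the previous lemma, applied not to $P$ itself but to the definable linear order obtained by restricting $\leqslant_P$ to the equivalence class $[a]_{\sim_d}$.

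Suppose toward a contradiction that there is some $a\in P$ with $\dim [a]_{\sim_d}\geqslant d$. Set $C:=[a]_{\sim_d}$ and $e:=\dim C\geqslant d$. Since $\sim_d$-classes are convex in $P$, the pair $(C,\leqslant_P{\restriction}C)$ is a definable linear order in $\cM$. Note that the proof of Lemma~\ref{lem:contains l-dim interval} uses nothing about $P$ beyond the fact that it is a definable linear order of the given dimension in an o-minimal structure with elimination of imaginaries; thus it applies verbatim to $C$. This yields elements $b_1<_P b_2$ in $C$ such that the interval
$$
(b_1,b_2)_C=\{c\in C:b_1<_P c<_P b_2\}
$$
has dimension $e$.

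The key observation is then that because $C$ is convex in $P$, every $c\in P$ with $b_1<_P c<_P b_2$ already lies in $C$, so $(b_1,b_2)_C=(b_1,b_2)_P$. Consequently $\dim(b_1,b_2)_P=e\geqslant d$, which means $b_1\not\sim_d b_2$. But $b_1,b_2\in C=[a]_{\sim_d}$ forces $b_1\sim_d b_2$, a contradiction.

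There is no real obstacle here; the only thing one has to be slightly careful about is the claim that Lemma~\ref{lem:contains l-dim interval} applies to $C$. This is genuinely automatic since that lemma was proved in the generality of an arbitrary definable linear order whose intervals are all infinite (which will hold for $C$ under the standing assumption, or can be arranged by first passing through the preceding quotient step), and its conclusion produces an interval of dimension equal to $\dim C$. The contradiction is then delivered by convexity alone.
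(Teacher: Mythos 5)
Your proof is correct and follows essentially the same route as the paper: apply Lemma~\ref{lem:contains l-dim interval} to the definable linear order $[a]_{\sim_d}$ to produce $b_1,b_2$ in the class with $\dim\,(b_1,b_2)_P=\dim\,[a]_{\sim_d}$ (convexity identifying the interval in the class with the interval in $P$), then use $b_1\sim_d b_2$ to conclude. The only cosmetic difference is that you phrase it as a contradiction while the paper argues directly.
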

\begin{proof}
The set
$[a]_{\sim_{d}}$ with the order induced by $\leqslant_P$ is a definable linear order. Applying the lemma above, there exists $b_{1},b_{2}\in[a]_{\sim_{d}}$ such that $\dim\,(b_{1},b_{2})_P=\dim\,[a]_{\sim_{d}}$. As $b_{1}\sim_{d}b_{2}$, $\dim\,(b_{1},b_{2})_P<d$.
\end{proof}

\begin{lem}\label{lem:one-dim}
The set $C$ consisting of all $a$ such that $(-\infty,a]_P$ and $[a,+\infty)_P$ are both closed 
\textup{(}in $M^m$\textup{)}
is at most one-dimensional.
\end{lem}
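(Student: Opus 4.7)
The plan is to argue by contradiction. Assume $\dim C \geq 2$. By o-minimal cell decomposition applied to $C$, there is a cell $C_0 \subseteq C$ with $\dim C_0 \geq 2$. Since $C_0$ is infinite, one can pick some $a \in C_0$ that is neither the $\leq_P$-minimum nor the $\leq_P$-maximum of $C_0$ (all but at most two points of $C_0$ satisfy this).

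With such an $a$ fixed, $a \in C$ forces both $(-\infty,a]_P$ and $[a,+\infty)_P$ to be closed in $M^m$. Since $(-\infty,a]_P \cup [a,+\infty)_P = P$ and $(-\infty,a]_P \cap [a,+\infty)_P = \{a\}$, intersecting with $C_0$ produces the decomposition
$$
C_0 \setminus \{a\} = \bigl((-\infty,a]_P \cap C_0 \setminus \{a\}\bigr) \cup \bigl([a,+\infty)_P \cap C_0 \setminus \{a\}\bigr),
$$
a disjoint union of two definable subsets, each relatively closed in $C_0 \setminus \{a\}$ (since $(-\infty,a]_P$ and $[a,+\infty)_P$ are closed in $M^m$) and nonempty by the choice of $a$. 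Thus $C_0 \setminus \{a\}$ is definably disconnected.

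The contradiction then comes from the standard o-minimal fact that a cell of dimension $\geq 2$, minus a single point, remains definably connected: such a cell is definably homeomorphic to an open box in $M^{\dim C_0}$, and in dimension at least $2$ any two points of the box can be joined by a definable path routed around the missing point. The only step calling for any care is this last topological assertion, but it is routine in o-minimal structures. The conclusion $\dim C \leq 1$ then follows.
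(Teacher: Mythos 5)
Your proof is correct and follows essentially the same route as the paper: both take a top-dimensional cell inside $C$, remove a non-extremal point $b$, observe that $(-\infty,b]_P$ and $[b,+\infty)_P$ being closed splits the punctured cell into two nonempty disjoint relatively closed definable pieces, and conclude from the definable connectedness of a punctured cell of dimension at least $2$ that $\dim(C)\leq 1$. The only cosmetic difference is that you argue by contradiction while the paper argues directly.
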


\begin{proof}
Let $d=\dim(C)$ and let
$C'\subseteq C$ be a $d$-dimensional cell.  Let $a_{1},b,a_{2}$ be distinct elements of $C'$ with $a_{1}<b<a_{2}$.
Then $a_{1}\in (-\infty,b)_P\cap C'$ and $a_{2}\in(b,+\infty)_P\cap C'$. Thus $(-\infty,b)_P\cap C'$ and $(b,+\infty)_P\cap C'$ form a nontrivial partition of  $C'\setminus\{b\}$ into disjoint closed sets.
Thus $C'\setminus\{b\}$ is not definably connected, and so $d=\dim(C')\leqslant 1$.
\end{proof}

\begin{lem}
$[a]_{\sim_l}$ is infinite,  for almost all $a$.
\end{lem}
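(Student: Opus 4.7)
The plan is to show $\dim B < l$ where $B := \{a \in P : [a]_{\sim_l} = \{a\}\}$. Because $\sim_l$-classes are convex and, by the reduction preceding this lemma, every open order-interval of $P$ is infinite, any $\sim_l$-class of size at least two contains an open interval and is therefore infinite; hence $[a]_{\sim_l}$ is infinite if and only if $a \notin B$, and the lemma is equivalent to $\dim B < l$.

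I would attempt to prove the inclusion $B \subseteq C$, where $C$ is the set from Lemma~\ref{lem:one-dim}; combined with $\dim C \leqslant 1 < l$ (the case $l \geqslant 2$ being the only interesting one, since in the reduced setting $\sim_1$ is trivial), this yields $\dim B \leqslant 1 < l$. To verify $B \subseteq C$, fix $a \in B$; by symmetry, it suffices to show $(-\infty, a]_P$ is closed in $M^m$. I argue contrapositively: supposing $x \in \overline{(-\infty, a]_P}\setminus (-\infty, a]_P$, either $x \notin P$ or $x \in P$ with $x >_P a$, and in the principal subcase the goal is to exhibit $b \in P$ with $a <_P b$ and $\dim\,(a, b)_P < l$, yielding $a \sim_l b$ and contradicting $a \in B$.

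The most useful intermediate observation is the following: for every $c$ satisfying $a <_P c <_P x$, the Euclidean closure $\overline{(-\infty, c]_P}$ contains $\overline{(-\infty, a]_P}$ and hence $x$, while $x \notin (-\infty, c]_P$ since $x >_P c$; so $(-\infty, c]_P$ fails to be closed in $M^m$, i.e., $c \notin C^+ := \{c' \in P : (-\infty, c']_P\text{ is closed in }M^m\}$. This places the entire $l$-dimensional set $(a, x)_P$ inside $P \setminus C^+$, and an analogous argument on the upper order-half (in the event $[a, +\infty)_P$ is also not closed, via a symmetric witness $y <_P a$) yields $(y, a)_P \subseteq P \setminus C^-$. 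The o-minimal curve selection then provides explicit definable curves witnessing the topological non-closedness, which, combined with a cell decomposition of $P$ compatible with the partition $P = (-\infty, a]_P \sqcup (a, +\infty)_P$, should transfer the low-dimensional frontier constraints near $x$ into an upper bound $\dim\,(a, b)_P < l$ for suitable $b >_P a$.

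The main obstacle is the dimensional accounting in this last step: Lemma~\ref{lem:one-dim} bounds $\dim(C^+ \cap C^-)$ rather than $C^+$ or $C^-$ individually, and a priori these halves may themselves be $l$-dimensional, so producing the contradiction requires combining the non-closedness failures on both order-halves at $a$ simultaneously and carefully propagating the frontier dimension bound $\dim\,\partial E' < \dim E' \leqslant l$ (for a cell $E' \subseteq (-\infty, a]_P$ with $x$ on its Euclidean frontier) through the order-interval structure of $(a, x)_P$. Executing this topological-to-order transfer cleanly is the crux of the argument.
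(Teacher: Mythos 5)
Your reduction of the lemma to the bound $\dim B<l$, where $B=\{a:[a]_{\sim_l}=\{a\}\}$, is correct and matches the paper. The gap is the claimed inclusion $B\subseteq C$: it is false in general, and the case you set aside as non-principal ($x\in\overline{(-\infty,a]_P}$ with $x\notin P$) is exactly where it breaks. A ray $(-\infty,a]_P$ can fail to be closed in $M^m$ simply because $P$ itself is not closed, which has nothing to do with the order. Concretely, let $P=(0,1)^2$ and let $\leqslant_P$ be the lexicographic order modified so that one fixed point $a$ is the maximum element. Then every $b<_Pa$ has $\dim\,(b,a)_P=2$, so $[a]_{\sim_2}=\{a\}$ and $a\in B$; but $(-\infty,a]_P=P$ is not closed in $M^2$, so $a\notin C$. (The lemma itself is fine here: $B=\{a\}$ has dimension $0$.) So no argument can establish $B\subseteq C$, and even in your principal subcase the step from ``$(a,x)_P\subseteq P\setminus C^+$'' to producing $b>_Pa$ with $\dim\,(a,b)_P<l$ is left entirely to a hoped-for ``topological-to-order transfer'' that you do not carry out.

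The paper avoids both problems by not applying Lemma~\ref{lem:one-dim} to $P$ at all. It introduces the graph $O=\{(a,b):a>_Pb\}\subseteq P\times P$, lets $D$ be its boundary, and uses $\dim(D)<2l$ together with the fiber lemma to carve out a small exceptional set $E$ (where some fiber $D_c$ or $D^c$ is $l$-dimensional). A short limit-point computation shows that for $c\in B\setminus E$, no point of $B\setminus E$ below $c$ lies in the closure of $(c,+\infty)_P$ and vice versa; hence the rays of the linear order \emph{induced on the definable set $B\setminus E$} are closed in $M^m$, and Lemma~\ref{lem:one-dim} applied to that induced order gives $\dim(B\setminus E)\leqslant 1$, whence $\dim(B)<l$. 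The essential idea you are missing is this restriction to a suitable definable subset (after discarding the bad set $E$ controlled by the boundary of $O$), which is what makes the closedness hypothesis of Lemma~\ref{lem:one-dim} actually verifiable.
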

\begin{proof}
Let $O$ be the set of $(a,b)\in P\times P$ such that $a>_P b$. Note that $O_{c}=(-\infty,c)_P$ and $O^{c}=(c,+\infty)_P$, for each $c$. We let $D$ be the boundary of $O$ in $P\times P$. As $\dim(D)<2l$, for almost all $c\in P$ we have $\dim(D_{c}),\dim(D^{c})<l$. Let $E$ be the set of $c$ such that $\dim(D_{c})\geqslant l$ or $\dim(D^{c})\geqslant l$.

Note that $[a]_{\sim_{l}}$ is finite if and only if it equals $\{a\}$. Let $A$ be the set of $a$ such that 
$[a]_{\sim_{l}}=\{a\}$.
Suppose that  $a<_{P}c$ and $c$ is in the closure of $(-\infty,a)_{P}$. Let $b\in(a,c)_{P}$. So $(c,b)\in O$, and $(c,b)$ is a limit point of $(-\infty,a)\times\{b\}\subseteq[P\times P]\setminus O$. Hence $(c,b)\in D$. This holds for any element of $(a,c)_{P}$, so $\{c\}\times(a,c)_{P}\subseteq D$. Hence $(a,c)_{P}\subseteq D_{c}$, and as $c\in A$, $\dim\,(a,c)_{P}=l$, so $\dim\,D_{c}\geqslant l$. Thus $c\in E$. An analogous argument shows that if there is an $a$ such that $a>_{P}c$ and $c$ is in the closure of $(a,+\infty)_{P}$, then $c\in E$. It follows from what we have shown that if $c_1,c_2\in A\setminus E$ and $c_1<_{P}c_2$ then $c_1$ is not in the closure of $(c_2,+\infty)_{P}$ and $c_2$ is not in the closure of $(-\infty,c_1)_{P}$. Consider $A\setminus E$ as a definable linear order with the order induced from $P$. For all $c\in A\setminus E$, both $(-\infty,c]_{A\setminus E}$ and $[c,+\infty)_{A\setminus E}$ are closed. From Lemma~\ref{lem:one-dim} we obtain $\dim(A\setminus E)=1$. So either $\dim(A)=1$ or $\dim(A)=\dim(E)<l$. In either case $\dim(A)<l$.
\end{proof}

With the following lemma we now finish the proof of Proposition~\ref{prp:Janak}:

\begin{lem}
$[a]_{\sim_{1}}$ is infinite, for almost all $a$.
\end{lem}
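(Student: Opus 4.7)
My plan is to prove this by induction on $l = \dim(P)$, viewing the claim as a uniform statement about every definable linear order satisfying the standing hypothesis that all intervals are infinite. The base case $l = 2$ follows directly from the preceding lemma (in which case $\sim_l$ plays the role of $\sim_1$), and the case $l = 1$ is essentially trivial.

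For the inductive step $l \geq 3$, set $A := \{a \in P : [a]_{\sim_1}\text{ is finite}\}$; by the standing hypothesis this equals $\{a \in P : [a]_{\sim_1} = \{a\}\}$. The task is to show $\dim A < l$. The preceding lemma gives $\dim(P \setminus B) < l$ where $B := \{a \in P : [a]_{\sim_l}\text{ is infinite}\}$, so it suffices to bound $\dim(A \cap B)$.

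For $a \in B$, the class $C_a := [a]_{\sim_l}$ is a convex definable linear order of dimension $k(a) \in \{1,\dots,l-1\}$, all of whose intervals remain infinite. By convexity, $(b, b')_P = (b, b')_{C_a}$ for $b, b' \in C_a$, so the restriction of $\sim_1$ to $C_a$ is exactly the intrinsic version of $\sim_1$ on the linear order $C_a$; moreover $[b]_{\sim_1} \subseteq C_a$, so the class of $b$ does not change when recomputed inside $C_a$. Applying the induction hypothesis to $C_a$ yields $\dim(A \cap C_a) < k(a)$.

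To aggregate these local estimates globally, I would stratify by fiber dimension. For each $k \in \{1,\dots,l-1\}$ let $B_k := \{a \in B : \dim C_a = k\}$ and let $\pi \colon B \longrightarrow P/{\sim_l}$ be the quotient. The fiber dimension theorem yields $\dim B_k = k + \dim \pi(B_k)$, hence $\dim \pi(B_k) \leq l - k$; since the fibers of $\pi|_{A \cap B_k}$ have dimension at most $k - 1$, a further application of the theorem gives $\dim(A \cap B_k) \leq (l - k) + (k - 1) = l - 1$. Taking the union over $k$ and combining with $\dim(P \setminus B) < l$ produces $\dim A < l$. I expect the main obstacle to be this global-to-local assembly: the definability of the stratification $B_k$ and the two successive fiber dimension calculations require some care, though everything is routine once the identification of $\sim_1$ restricted to $C_a$ with the intrinsic $\sim_1$ of $C_a$ is in hand.
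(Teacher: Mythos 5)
Your proof is correct and follows essentially the same route as the paper's: induct on $\dim(P)$, use the preceding lemma to make almost every $[a]_{\sim_{l}}$ an infinite convex suborder of dimension $<l$, apply the induction hypothesis inside each such class (using convexity to identify the restricted relation with the intrinsic one), and aggregate with the fiber dimension theorem. You merely make explicit the stratification by fiber dimension and the two fiber-lemma computations that the paper's one-line appeal to ``the fiber lemma'' leaves implicit.
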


\begin{proof} 
We show this by induction on $l=\dim(P)$. If $l=1$, then this is trivially true. Suppose this statement holds 
for all smaller values of $l$.
For almost all $a$, $[a]_{\sim_{l}}$ is infinite, by the previous lemma. As $\dim\,[a]_{\sim_{l}}<l$ for almost all $b\sim_{l}a$, there are infinitely many $c\in[a]_{\sim_{l}}$ such that $c\sim_{1} b$. The fiber lemma for o-minimal dimension now implies that $[a]_{\sim_{1}}$ is infinite for almost all~$a$.
\end{proof}

Note that these constructions are done uniformly in the parameters defining $(P,{\leqslant}_{P})$. Namely, if $P\subseteq M^{k}\times M^{m}$ and ${\leq_P}\subseteq M^{k}\times (M^{m}\times M^{m})$ are definable sets such that for each $a\in M^{k}$, ${\leq_{P_a}}:=(\leq_{P})_a$ is a linear order on $P_{a}$, then there are definable sets $Q\subseteq M^{k}\times M^{n}$, ${\leq_Q}\subseteq M^{k}\times(M^{n}\times M^{n})$ and $R\subseteq M^{k}\times (M^{m}\times M^{n})$ such that for each $a\in M^{k}$ with $\dim(P_a)\geq 2$, ${\leq_{Q_a}}:=(\leq_{Q})_a$ is a linear order on $Q_{a}$ and $R_{a}$ is the graph
of a monotone map  $(P_{a},{\leq_{P_a}}) \to (Q_{a},{\leq_{Q_a}})$ satisfying the conditions of Proposition~\ref{prp:Janak}.

\section{Rational Cuts in Definable Linear Orders}

\noindent From now on until the end of the paper we assume that $\cM$ expands an ordered abelian group.
As in the previous section, we let $(P,{\leqslant_P})$ be a definable linear order.
We now give an application of Proposition~\ref{prp:Janak} used in our proof of the Marker-Steinhorn Theorem in the next section. Recall that we assume $P\subseteq M^m$.

\begin{prp}\label{prp:Janak, consequence}
If $V\subseteq P^{*}$ is definable in $\cM^{*}$ and $W=V\cap P$ is a cut in~$P$, then $W$ is definable in $\cM$.
\end{prp}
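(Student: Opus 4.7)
The plan is to argue by induction on $l = \dim(P)$.

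\textbf{Base case} ($l \leq 1$). Using cell decomposition and the standard o-minimal fact that a definable linear order on a one-dimensional set is piecewise monotone with respect to the coordinate order on $M$, I would reduce to the case $P \subseteq M$ is an interval on which $\leq_P$ agrees with $\leq$ (the reversed case being symmetric). Then $W = V \cap P$ is a $\leq$-initial segment of $P$, determined by its supremum $\gamma := \sup_{\leq} W \in M^* \cup \{\pm\infty\}$, which is readable from the $\cN$-definable set $V$. By tameness of $\cM \preceq \cN$, $\std(\gamma) \in M \cup \{\pm\infty\}$, so $W$ equals either $P \cap (-\infty, \std(\gamma))$ or $P \cap (-\infty, \std(\gamma)]$---both definable in $\cM$. (The $l = 0$ case is trivial since $P$ is finite.)

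\textbf{Inductive step} ($l \geq 2$). Apply Proposition~\ref{prp:Janak} to obtain a definable monotone surjection $\rho \colon P \to Q$ with $\dim Q = l - 1$ and fibers of dimension at most $1$. Monotonicity of $\rho$ into the linear order $(Q, \leq_Q)$ forces each fiber $\rho^{-1}(q)$ to be $\leq_P$-convex. Consider the sets
$$
V_1 = \{q \in Q^* : (\rho^*)^{-1}(q) \subseteq V\}, \qquad V_2 = \{q \in Q^* : (\rho^*)^{-1}(q) \cap V \neq \emptyset\},
$$
both definable in $\cN$. Since $W$ is $\leq_P$-downward-closed and fibers are convex, a short chase shows that $V_1 \cap Q \subseteq V_2 \cap Q$ are both cuts in $Q$ and that their difference $(V_2 \cap Q) \setminus (V_1 \cap Q)$ contains at most one element---the unique $q^* \in Q$, if any, on which $W$ induces a proper (neither empty nor full) cut of the fiber. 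By the inductive hypothesis applied to $(Q, \leq_Q)$, both $V_1 \cap Q$ and $V_2 \cap Q$ are definable in $\cM$, and hence so is $q^*$. Applying the base case to the (at most) one-dimensional linear order $(\rho^{-1}(q^*), \leq_P)$ together with the $\cN$-definable set $V \cap (\rho^{-1}(q^*))^*$ shows that $W \cap \rho^{-1}(q^*)$ is definable in $\cM$. Assembling,
$$
W = \rho^{-1}(V_1 \cap Q) \cup \bigl(W \cap \rho^{-1}(q^*)\bigr)
$$
is definable in $\cM$.

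\textbf{Main obstacle.} The substantive work is in the base case: one must reduce to the ambient order on $M$ so that tameness, applied to the boundary element $\gamma \in M^*$, produces $\std(\gamma) \in M$ and with it an $\cM$-definable cut. The inductive step itself is essentially bookkeeping, the key observation being that monotonicity of $\rho$ makes its fibers convex, so a cut in $P$ projects under $\rho$ to a cut in $Q$ modulo a single boundary fiber, which is then dispatched by the base case.
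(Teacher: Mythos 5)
Your overall skeleton (induct on $\dim(P)$, use Proposition~\ref{prp:Janak} to get $\rho\colon P\to Q$, push the cut down to $Q$, and dispatch a single boundary fiber with the one-dimensional case) matches the paper's, and your base case is essentially Lemma~\ref{one-dim}. But the inductive step has a genuine gap, and it is exactly the one the paper flags as ``the most difficult part of this paper'': the sets $V_1\cap Q$ and $V_2\cap Q$ need not be cuts in $Q$, and their difference need not be a single point. The hypothesis only constrains $V$ on the standard points $P$; on the nonstandard part $(\rho^*)^{-1}(q)\setminus\rho^{-1}(q)$ of a standard fiber, $V$ is completely unconstrained. Hence $(\rho^*)^{-1}(q)\subseteq V$ can fail for a $q$ with $\rho^{-1}(q)\subseteq W$ (delete one nonstandard point of that fiber from $V$; this does not change $W$), and $(\rho^*)^{-1}(q)\cap V\neq\emptyset$ can hold for $q$ far above the cut (add one nonstandard point of a high fiber to $V$). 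Concretely, take $P=M\times M$ ordered lexicographically, $\rho$ the first projection, and $V=\{(x,y)\in (M^*)^2: x<0\}\setminus\{(c,t)\}$ with $c\in M$, $c<0$, $t\in M^*\setminus M$: then $W$ is unchanged but $V_1\cap Q=(-\infty,0)_M\setminus\{c\}$ is not a cut, so the inductive hypothesis does not apply to it. The sets you actually want, $\{q\in Q:\rho^{-1}(q)\subseteq W\}$ and $\{q\in Q:\rho^{-1}(q)\cap W\neq\emptyset\}$, are indeed cuts differing by at most one point, but they are defined from $W$ (hence only in the pair $(\cN,\cM)$), not as traces of $\cN$-definable subsets of $Q^*$, so the induction cannot be applied to them either.

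The missing idea is how to manufacture an $\cN$-definable $B\subseteq Q^*$ whose trace $B\cap Q$ equals $\rho(W)$ up to its maximal element. The paper does this by injecting each fiber $\rho^{-1}(q)$ into finitely many copies of $M$ via the coordinate projections (the map $\iota$), and then invoking the finitely additive measure $\mu$ on bounded definable subsets of $M$: by Lemma~\ref{lem:measures}, if $\rho^{-1}(q)\subseteq W$ then $V$ must cover each component of the starred fiber up to a set of infinitesimal measure, while if $\rho^{-1}(q)\cap W=\emptyset$ then $V$ meets it in a set of infinitesimal measure; the condition ``measure less than half the expected value'' is expressible in $\cN$ and separates the two cases for standard $q$. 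This is also the only place where the ordered-group hypothesis enters the whole proof of Marker--Steinhorn; the fact that your argument never uses it is itself a warning sign.
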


The proof of this proposition is the most difficult part of this paper. The difficulty largely lies in the fact that $V$ is not assumed to be a cut in $P^*$. If $V$ was a cut, then we could try to prove the result in the following way: argue by induction on $\dim (P)$, let $\rho\colon P\longrightarrow Q$ be the map given by Proposition~\ref{prp:Janak}, let $B\subseteq Q^*$ be the set of $q$ such that $\rho^{-1}(q)\subseteq V$, argue inductively that $B\cap Q$ is $\cM$-definable, and use this to show that $W$ is $\cM$-definable. It is natural to try to apply this arguement to our situation by replacing $V$ with its convex hull $V'$ in  $Q$. However $W$ can be a proper subset of $V'\cap P$. For example let $P=(M,\leqslant$), let $t$ be an element of $M^*$ larger then every element of $M$, and let $V=(0,1)\cup \{t\}$.

\medskip
\noindent
In the proof of Proposition~\ref{prp:Janak, consequence} we also need the following two lemmas. The first is the base case of the Marker-Steinhorn theorem.

\begin{lem}\label{one-dim}
Let $A\subseteq M^{m}$ be a definable one-dimensional subset of~$M^{m}$, and let $B\subseteq (M^*)^{m}$ be definable in $\cN$. Then $B\cap A$ is definable in $\cM$.
\end{lem}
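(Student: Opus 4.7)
The plan is to reduce to the case $m=1$ via cell decomposition, and then handle that base case directly using o-minimality of $\cN$ together with tameness.

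First, applying the cell decomposition theorem in $\cM$, the set $A$ decomposes into finitely many definable cells of dimension at most $1$. The $0$-cells are single points which are handled trivially, so it suffices to treat a single $1$-cell $C\subseteq M^m$. Such a cell admits a definable continuous bijection $\gamma\colon I\longrightarrow C$ from an open interval $I\subseteq M$, with definable inverse. Setting $T:=\{t\in I:\gamma(t)\in B\}$, we have $B\cap C=\gamma(T)$, so it is enough to show $T$ is definable in $\cM$. Observe that $T=T'\cap M$, where $T':=\{t\in I^{*}:\gamma^{*}(t)\in B\}$ is definable in $\cN$ as a subset of $M^{*}$. This reduces the lemma to the one-dimensional statement: whenever $T'\subseteq M^{*}$ is definable in $\cN$, the set $T'\cap M$ is definable in $\cM$.

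For the base case, I would use o-minimality of $\cN$ to write $T'$ as a finite disjoint union of points and open intervals in $M^{*}$. The points lying in $M$ are singleton sets definable in $\cM$, and those outside $M$ contribute nothing to $T'\cap M$. For an interval component $(b,c)$ with $b,c\in M^{*}\cup\{\pm\infty\}$, tameness guarantees that $\std(b)$ and $\std(c)$ exist in $M\cup\{\pm\infty\}$. A short case analysis on whether $b$ lies in $M$, strictly above $\std(b)$, or strictly below $\std(b)$ (and similarly for $c$) shows that $(b,c)\cap M$ is an interval in $M$ with endpoints $\std(b)$ and $\std(c)$, each open or closed according to the position of the non-standard endpoint relative to its standard part. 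In every case the resulting interval is $\cM$-definable from the parameters $\std(b),\std(c)\in M\cup\{\pm\infty\}$, and hence $T'\cap M$ is $\cM$-definable as a finite union.

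The main conceptual step is precisely this invocation of tameness at the endpoints of the interval components of $T'$, which is where the hypothesis of the lemma is used essentially; the rest is routine cell decomposition combined with the observation that preimage under a definable parametrization $\gamma$ translates the ambient question in $M^m$ into a one-dimensional question over $M$. No induction on $m$ is needed beyond this reduction, and Proposition~\ref{prp:Janak} plays no role in the base case.
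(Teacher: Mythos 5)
Your proof is correct and follows essentially the same route as the paper's: reduce via cell decomposition to a definably parametrized one-dimensional piece, pull the question back along the parametrization to a subset of $M^{*}$ definable in $\cN$, and then use o-minimality of $\cN$ together with tameness to compute the intersection of each component with $M$ via the standard parts of its endpoints. The only cosmetic difference is that the paper parametrizes each piece by a surjection from $M$ and writes the resulting subset of $M^{*}$ as a boolean combination of rays, whereas you use a bijection from an interval and a decomposition into points and intervals.
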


\begin{proof}
By Cell Decomposition, $A$ is the union of finitely many sets of the form $f(M)$, where
$f\colon M\longrightarrow A$ is a definable map.
We may thus reduce to the case that $A$ itself is of this form.
It suffices to show that $f^{-1}(B\cap A)=(f^*)^{-1}(B)\cap M$ is definable. So we may assume that $m=1$ and $A=M$.
Then $B$ is a boolean combination of rays of the type $(-\infty,b)_{M^*}$ or $(-\infty,b]_{M^*}$, where $b\in M^*$.
Let $b\in M^*$; if $b>M$, then $M\cap (-\infty,b)_{M^*}=M\cap (-\infty,b]_{M^*}=M$;
otherwise, $M\cap (-\infty,b)_{M^*}$ and $M\cap (-\infty,b]_{M^*}$ each equal one of
$(-\infty,\std(b))_{M}$ or $(-\infty,\std(b)]_{M}$. 
\end{proof}

\begin{lem}\label{lem:measures}
Let $A\subseteq M$ be bounded, infinite and definable, and let $B\subseteq A^{*}$ be definable in $\cN$. If $A\subseteq B$, then $\std(\mu(B))=\mu(A)>0$. If $A\cap B=\emptyset$ then $\std(\mu(B))=0$.
\end{lem}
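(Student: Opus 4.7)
My plan is to reduce both claims to a single key observation: any bounded $\cN$-definable open interval in $M^{*}$ disjoint from $M$ has infinitesimal length.

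By o-minimality and boundedness, $A$ is a finite union of singletons and open intervals $(a_{i},b_{i})_{M}$ with $a_{i},b_{i}\in M$. The singletons do not contribute to $\mu$, so $\mu(A)=\sum_{i}(b_{i}-a_{i})$, which is positive because $A$ being infinite forces at least one such component to be nontrivial. Since $\mu$ is given by a definable function on parameterized families, $\mu(A^{*})=\mu(A)$, and $A^{*}$ has the analogous decomposition with $(a_{i},b_{i})_{M^{*}}$ in place of $(a_{i},b_{i})_{M}$.

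For the key observation, suppose $I=(c,d)_{M^{*}}$ has $c,d$ in the convex hull of $M$ and $I\cap M=\emptyset$. By tameness, $\std(c),\std(d)\in M$. If $\std(c)<\std(d)$, pick $e\in M$ strictly between them: then $e>\std(c)$ forces $e>c$ (otherwise $e\leqslant c$ with $e\in M$ would give $e\leqslant\std(c)$), and $e<\std(d)$ supplies some $a\in M$ with $e<a\leqslant d$, hence $e<d$; so $e\in I\cap M$, a contradiction. Hence $\std(c)=\std(d)=:e$. For any positive $\eta\in M$, density yields positive $\delta_{1},\delta_{2}\in M$ with $\delta_{1}+\delta_{2}<\eta$; the group structure places $e-\delta_{1},e+\delta_{2}$ in $M$, and the definition of $\std$ forces $c>e-\delta_{1}$ and $d<e+\delta_{2}$, whence $d-c<\eta$. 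Thus $d-c$ is smaller than every positive element of $M$, so $\std(\mu(I))=0$.

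Both cases of the lemma now follow. In Case 1 ($A\subseteq B$), the set $A^{*}\setminus B$ is a bounded $\cN$-definable subset of $A^{*}$, hence a finite union of intervals; each such interval lies in some $(a_{i},b_{i})_{M^{*}}$ and meets $M$ only inside $(a_{i},b_{i})_{M}\subseteq A\subseteq B$, so meets $M$ not at all. By the key observation each component has infinitesimal length, so $\mu(A^{*}\setminus B)$ is infinitesimal, and finite additivity yields $\std(\mu(B))=\mu(A^{*})-\std(\mu(A^{*}\setminus B))=\mu(A)$. In Case 2 ($A\cap B=\emptyset$), the same structural analysis applied to $B$ itself shows each interval component of $B$ lies in some $(a_{i},b_{i})_{M^{*}}$ and meets $M$ only in $(a_{i},b_{i})_{M}\subseteq A$, which is disjoint from $B$; hence each component is infinitesimal and $\std(\mu(B))=0$.

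The only delicate step is the key observation: tameness is needed to place $\std(c),\std(d)$ inside $M$, and the ordered abelian group structure is needed to produce $e\pm\delta_{i}\in M$ sandwiching $c$ and $d$. Everything else is routine o-minimal bookkeeping.
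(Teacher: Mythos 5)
Your proof is correct and follows essentially the same route as the paper: both arguments rest on the observation that an interval with endpoints in the convex hull of $M$ and non-infinitesimal length must meet $M$ (you state the contrapositive), and both then apply this to the finitely many interval components of $B$ (for the disjointness case) and of $A^{*}\setminus B$ (for the containment case). Your version merely spells out in more detail the standard-part computation that the paper leaves implicit.
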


\begin{proof}
Let $c<d$ be elements of $M^{*}$ contained in the convex hull of $M$. If $\std(d-c)>0$ then $(c,d)_{M^*}$ must contain infinitely many elements of $M$. Therefore if $\std(\mu(B))>0$, then 
$B$ contains infinitely many elements of $A$ (as then $B$ contains an interval whose length is not infinitesimal); so $A\cap B\neq\emptyset$. If $\std(\mu(B))<\mu(A)=\mu(A^{*})$ then $\std(\mu(A^{*}\setminus B))>0$, so as before $A^{*}\setminus B$ contains infinitely many points in $A$, therefore $A$ is not a subset of~$B$.
\end{proof}

\begin{proof}[Proof of Proposition~\ref{prp:Janak, consequence}]
We use induction on $l=\dim(P)$.  If $l=1$, then this is a special case of the preceding Lemma~\ref{one-dim}. Suppose that $l\geqslant 2$. Take $(Q,\leqslant_{Q})$ and $\rho\colon P\longrightarrow Q$ as in Proposition~\ref{prp:Janak}. We fix a positive element $1$ of $M$ and identify $\mathbb Q$ with its image under the embedding $\mathbb Q\to M$ of (additive) ordered abelian groups which sends $1\in\mathbb Q$ to $1\in M$. 
We shall specify an integer $N\geq 1$ and a definable injective map 
$$\iota\colon P\longrightarrow Q\times M\times\{1,\dots,m\}\times\{1,\dots,N\} \subseteq Q\times M\times M\times M$$
with the property that  $\iota(p)=(\rho(p),\dots)$ for each $p\in P$. 
We let $i$ range over  $\{1,\dots, m\}$, and for each $i$ we let $\pi_{i}\colon P\longrightarrow M$ be the restriction to $P$ of the projection $M^m\to M$ onto the $i$th coordinate. 
For each $q\in Q$ define $P_{q}^{i}$  inductively as the set of $a\in\rho^{-1}(q)\setminus (P_{q}^{1}\cup\dots\cup P_{q}^{i-1})$ such that there are only finitely many
$b\in\rho^{-1}(q)$ with $\pi_{i}(a)=\pi_{i}(b)$. For each $q\in Q$, $\rho^{-1}(q)$ is then the disjoint union of the~$P_{q}^{i}$. Let $N\in\mathbb N$ be such that for all $q$,~$i$, the fibers of $\pi_{i}|_{P_{q}^{i}}$ have cardinality bounded by $N$.
If $p\in P_{\rho(p)}^{i}$ is the $j$th element of $\pi_{i}^{-1}(\rho(p))\cap P_{\rho(p)}^{i}$
 in the lexiographic order induced from $M^{m}$, then we set $$\iota(p)=(\rho(p),\pi_{i}(p),i,j).$$
Below, we let $j$ range over $\{1,\dots,N\}$.

Let now $V\subseteq P^{*}$ be definable in $\cM^{*}$ such that $W=V\cap P$ is a cut in~$P$.
As $\rho$ is monotone, $\rho(W)$ is  a cut in $Q$. We construct a set
$B\subseteq Q^{*}$, definable in $\cM^{*}$, such that $B\cap Q=\rho(W)$. (It will then follow from the inductive hypothesis that $\rho(W)$ is definable.) It is easily seen that if $q$ is a non-maximal element of $\rho(
W)$ then $\rho^{-1}(q)$ is contained in $W$. It is also easily seen that if $q\in Q$ is not in $\rho(W)$ then $\rho^{-1}(q)$ is disjoint from $W$.
For $q\in Q^*$ we define  
$$P(q):=\iota^*\big((\rho^*)^{-1}(q)\big),\qquad W(q):=\iota\big(W\cap(\rho^*)^{-1}(q)\big).$$ 
and $$V(q)=\iota^{*}\big(V\cap (\rho^*)^{-1}(q)\big),$$
so that $V(q)\cap (Q\times M^3)=W(q)$ if $q\in Q$. 
Again, for all $q\in Q$, if $q$ is  a non-maximal element of $\rho(W)$ then $P(q)\subseteq W(q)$, and if
$q\notin\rho(W)$ then $W(q)=\emptyset$.

For $q\in Q^*$ let $P(q,i,j)$ be the set of $s\in M^*$ such that $(q,s,i,j)\in P(q)$, and define   $V(q,i,j)\subseteq M^*$ likewise.
Now we list some consequences of Lemma~\ref{lem:measures}. For this, let $q\in Q$ and  $c,d\in M^*$ with $c<d$.
If  $P(q)\subseteq W(q)$, then:
\begin{enumerate}
\item If $(c,+\infty)_{M^*}$ is the interior of a component of $P(q,i,j)$ then  $$\std\big(\mu(V(q,i,j)\cap[c,c+1]_{M^*})\big)=1.$$ 
\item If $(-\infty,c)_{M^*}$ is the interior of a component of $P(q,i,j)$ then $$\std\big(\mu(V(q,i,j)\cap [c-1,c]_{M^*})\big)=1.$$ 
\item If $(c,d)_{M^*}$ is the interior of a component of $P(q,i,j)$ then $$\std\big(\mu(V(q,i,j)\cap[c,d]_{M^*})\big)=d-c.$$
\end{enumerate}
On the other hand, if  $W(q)=\emptyset$, then in each of the preceding cases the standard part of the measure of the intersection of $V(q,i,j)$ with the appropriate segment in $M^*$ is zero. Let now $\Lambda\in Q$ be the maximal element of $\rho(W)$ if this exists, and some fixed element of $Q$ otherwise. We let $B$ be the set of $q\in Q^{*}$ such that for all $i$, $j$ and all $c<d$ in $M^*$,
\begin{enumerate}
\item if $(c,+\infty)_{M^*}$ is the interior of a component of $P(q,i,j)$ then $$\mu\big(V(q,i,j)\cap [c,c+1]_{M^*}\big)<\frac{1}{2};$$
\item if $(-\infty,c)_{M^*}$ is the interior of a component of $P(q,i,j)$ then $$\mu\big(V(q,i,j)\cap [c,c-1]_{M^*}\big)<\frac{1}{2};$$
\item if $(c, d)_{M^*}$ is the interior of a component of $P(q,i,j)$ then $$\mu\big(V(q,i,j)\cap [c,d]_{M^*}\big)<\frac{1}{2}(d-c).$$
\end{enumerate} 
The set $B$ is definable in $\cN$, and $B\cap Q$ is the set of all nonmaximal elements of $\rho(W)$, possibly together with $\Lambda$. This is a cut in $Q$. By induction, $B\cap Q$ is definable in $\cM$. Let $p\in P$. If $\rho(W)$ has a maximal element then $p$ is in $W$ if $\rho(p)<\Lambda$ or if $p\in W\cap \rho^{-1}(\Lambda)$. By Lemma~\ref{one-dim}, $W\cap\rho^{-1}(\Lambda)$ is definable in $\cM$. If $\rho(W)$ does not have a maximal element then $p\in W$ if and only if $\rho(p)\in B$.
\end{proof}

By carefully keeping track of the parameters used in the proof of  Lemma~\ref{one-dim}, we see that we have in fact proven the following uniform version of the lemma, which also provides the base case of the uniform Marker-Steinhorn Theorem.

\begin{lem}\label{one-dim, uniform}
Let $A\subseteq M^k \times M^m$ be definable with $\dim(A_x)=1$ for every $x\in M^k$, and let $B\subseteq (M^{*})^{j}\times(M^*)^m$ be definable in $\cN$. Then there is a definable $E\subseteq M^l \times M^m$, for some $l$, and a  map $\Omega\colon M^k\times (M^{*})^{j}\to M^l$, definable in the $\mathcal L^*$-structure $(\cN,\cM)$, such that $A_x\cap B_{a}=E_{\Omega(x,a)}$.
\end{lem}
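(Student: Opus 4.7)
The plan is to uniformize the three steps of the non-uniform proof of Lemma~\ref{one-dim}: reduce via cell decomposition in $\cM$ to a one-parameter family of arcs, convert the problem to intersecting $M$ with $\cN$-definable subsets of $M^*$, and then apply cell decomposition in $\cN$ together with the standard part map. First I would apply uniform cell decomposition to $A \subseteq M^k \times M^m$ relative to the projection onto $M^k$, reducing after finitely many cases to the situation where $A_x = \{f(x, t) : t \in I_x\}$ for some $\cM$-definable $f \colon M^k \times M \to M^m$ injective in $t$ and $\cM$-definable interval $I_x \subseteq M$; the subcase where $A_x$ is a single $\cM$-definable section of $x$ is handled similarly but more easily. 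Pulling back along $f$, the task becomes to provide uniform definable data for
$$I_x \cap \bigl\{t \in M : f^*(x, t) \in B_a\bigr\}.$$
Absorbing $(x, a)$ into a single parameter $\widetilde a \in (M^*)^{k+j}$ and reincorporating $I_x$ at the very end, the work reduces to the following core statement: for every $\cN$-definable $C \subseteq (M^*)^r \times M^*$ there exist an $\cM$-definable $E' \subseteq M^{l'} \times M$ and an $(\cN, \cM)$-definable $\Omega' \colon (M^*)^r \to M^{l'}$ with $M \cap C_{\widetilde a} = E'_{\Omega'(\widetilde a)}$ for every $\widetilde a$.

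For the core I would apply uniform o-minimal cell decomposition in $\cN$, which yields a finite partition of $(M^*)^r$ into $\cN$-definable pieces and, on each piece, $\cN$-definable functions $g_1(\widetilde a) < \dots < g_s(\widetilde a)$ in $M^*$ such that $C_{\widetilde a}$ is a fixed boolean combination of the open intervals these endpoints cut out in $M^*$ (together with the singletons $\{g_i(\widetilde a)\}$). The analytical content, exactly as in the proof of Lemma~\ref{one-dim}, is that the intersection $M \cap (g_i(\widetilde a), g_{i+1}(\widetilde a))_{M^*}$ depends only on where each endpoint sits relative to $M$: an endpoint above or below $M$ gives a trivial contribution, while an endpoint in the convex hull of $M$ contributes $\std(g_i(\widetilde a)) \in M$, with inclusion or exclusion of that endpoint governed by which of $g_i(\widetilde a) = \std(g_i(\widetilde a))$, $g_i(\widetilde a) > \std(g_i(\widetilde a))$, or $g_i(\widetilde a) < \std(g_i(\widetilde a))$ holds. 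All these conditions are $\mathcal{L}^*$-definable, so I would refine the partition of $(M^*)^r$ into $\mathcal{L}^*$-definable subcells on which every case distinction becomes constant.

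On each final subcell I would take $\Omega'(\widetilde a)$ to be the tuple of standard parts $(\std(g_1(\widetilde a)), \dots, \std(g_s(\widetilde a)))$ (using a default value in $M$ wherever $\std$ is not applicable) together with flag coordinates in $M$ recording which subcell $\widetilde a$ lies in and the outcome of each case distinction above; by construction this map is definable in $(\cN, \cM)$. The set $E' \subseteq M^{l'} \times M$ is then the $\cM$-definable set cut out by the formula that reads the flags, selects the appropriate boolean combination of intervals in $M$ with the recorded endpoints, and is padded across subcells to a common arity $l'$. This gives $M \cap C_{\widetilde a} = E'_{\Omega'(\widetilde a)}$, and composing with $f$ and intersecting with $I_x$ yields $E$ and $\Omega$ for the original lemma. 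The main obstacle is purely bookkeeping: the number of endpoints and their positions relative to $M$ vary between pieces of the partition, and these finitely many combinatorial shapes must be packaged into a single $\Omega$ and a single $E$ of fixed arities. The analytical content is entirely supplied by Lemma~\ref{one-dim}, applied piece by piece.
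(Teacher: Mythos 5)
Your proposal is correct and follows essentially the same route as the paper: the paper obtains Lemma~\ref{one-dim, uniform} simply ``by carefully keeping track of the parameters used in the proof of Lemma~\ref{one-dim},'' and your argument is exactly that bookkeeping carried out explicitly --- uniform cell decomposition of $A$ over $M^k$ to reduce to definably parametrized curves, pulling back along the parametrization to reduce to intersecting $M$ with $\cN$-definable subsets of $M^*$, and then recording, via the standard part map and finitely many $\mathcal L^*$-definable flags, how each interval endpoint sits relative to $M$. No gaps; the packaging of the finitely many combinatorial shapes into a single $\Omega$ and $E$ of fixed arity is routine, as you note.
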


Similarly, by carefully keeping track of the parameters used to define $B$ and $W$ and strengthening the inductive assumption in the natural way, we can see that we have in fact proven a uniform version of Proposition \ref{prp:Janak, consequence}: $\iota$ can be defined uniformly in the same way as $\rho$; $B$ can be defined uniformly from $W$; and if $W\subseteq (M^{*})^{k}\times P^{*}$ is definable in $\cN$ then the map $\Lambda\colon (M^{*})^{k}\longrightarrow Q$ that takes $a$ to the maximum of $\rho(W_{a}\cap P)$ if such exists and to some fixed element of $Q$ otherwise, is definable in $(\cN,\cM)$.

\begin{prp}\label{prp:Janak, consequence, uniform}
Let $P\subseteq M^{l}\times M^{m}$ and $\leq$ be a subset of $M^{l}\times [M^{m}\times M^{m}]$ such that for every $a\in M^{l}$, $\leq_{a}$ is a linear order on $P_{a}$. Let $V\subseteq (M^*)^{k}\times P^{*}$ be definable in $\cM^{*}$. Then there is some $j$ and a map $\Omega\colon (M^*)^k\times M^{l}\longrightarrow M^{j}$,   definable in the $\mathcal L^*$-structure $(\cN,\cM)$,  and a definable $W\subseteq M^{j}\times M^{m}$ such that for each $x\in (M^*)^{k}$ and $a\in M^{l}$, if $(V_{x}\cap P)_{a}$ is a cut in $P_{a}$ then $(V_{x}\cap P)_{a}=W_{\Omega(x,a)}$.
\end{prp}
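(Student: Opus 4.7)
The plan is to run the proof of Proposition~\ref{prp:Janak, consequence} parametrically, carrying the parameters $x \in (M^*)^k$ and $a \in M^l$ through every construction. First I would partition $M^l$ into finitely many definable pieces on which $d := \dim(P_a)$ is constant, and then induct on $d$ on each piece. The base case $d=1$ is precisely Lemma~\ref{one-dim, uniform}, applied uniformly in $(x,a)$ after writing the cut $(V_x \cap P)_a$ as a boolean combination of rays whose endpoints are extracted via the standard part map.

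For the inductive step $d \geq 2$, I would invoke the parametric form of Proposition~\ref{prp:Janak} recorded at the end of Section~2 to obtain definable families $\{(Q_a,\leqslant_{Q_a})\}_a$ and $\{\rho_a \colon P_a \to Q_a\}_a$, and define the injection $\iota_a$ by the same formula as in the non-uniform proof (uniform finiteness furnishes a single $N$ working for all $a$ simultaneously). Next, define uniformly in $(x,a)$ the set $B_{(x,a)} \subseteq Q_a^*$ by the three $\mu$-conditions from the proof of Proposition~\ref{prp:Janak, consequence}, with $V_x$ in place of $V$; the resulting family $B$ is a single set definable in $\cN$. Whenever $(V_x \cap P)_a$ is a cut in $P_a$, the set $B_{(x,a)} \cap Q_a$ is a cut in $Q_a$, so the inductive hypothesis applied to $B$ (with enlarged parameter $(x,a) \in (M^*)^k \times M^l$) furnishes an $(\cN,\cM)$-definable map $\Omega_0$ and an $\cM$-definable $W_0$ witnessing these cuts.

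To finish, one recovers $(V_x \cap P)_a$ as $\rho_a^{-1}\big((W_0)_{\Omega_0(x,a)}\big)$, possibly unioned with a one-dimensional correction on the single fiber $\rho_a^{-1}(\Lambda(x,a))$, where $\Lambda(x,a)$ is the maximum of $\rho_a((V_x \cap P)_a)$ when it exists and a fixed default otherwise. The map $\Lambda$ is $(\cN,\cM)$-definable because, once $\Omega_0(x,a)$ is known, the existence and identity of this maximum are first-order in $(\cN,\cM)$ via the standard part map. The one-dimensional correction on $\rho_a^{-1}(\Lambda(x,a))$ is supplied uniformly by Lemma~\ref{one-dim, uniform}. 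Gluing the two possibilities (whether or not the maximum exists) via an $(\cN,\cM)$-definable flag yields the desired $\Omega$ and $W$. The main obstacle is not any individual step but the bookkeeping of parameters: verifying that each ingredient of the non-uniform proof really is uniform in $(x,a)$, invoking the inductive hypothesis on the correctly enlarged parameter space $(M^*)^{k} \times M^l$, and packaging the case distinction into a single $\Omega$ and $W$.
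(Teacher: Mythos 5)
Your proposal is correct and follows essentially the same route as the paper: the paper's own justification of this proposition is precisely to re-run the proof of Proposition~\ref{prp:Janak, consequence} while tracking parameters, using the parametric version of Proposition~\ref{prp:Janak} from the end of Section~2, defining $\iota$ and $B$ uniformly, strengthening the inductive hypothesis to the enlarged parameter space, and observing that the map $\Lambda$ picking out the maximum of $\rho(W_a\cap P)$ (or a default) is definable in $(\cN,\cM)$. Your write-up supplies somewhat more bookkeeping detail than the paper, which leaves these verifications to the reader, but the content is the same.
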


We remark that the use of the function $\iota$ in the proof of Proposition~\ref{prp:Janak, consequence} may be avoided by using Ramakrishnan's theorem \cite{Ramakrishnan} on embedding definable linear orders into lexicographic orders. 
Moreover, the only point in our proof of the Marker-Steinhorn Theorem where we need to assume that $\cM$ expands an ordered abelian group is in Proposition~\ref{prp:Janak, consequence}.

\section{Proof of the Marker-Steinhorn Theorem}
\noindent
We now prove the uniform Marker-Steinhorn Theorem. Recall our standing assumption that $\cM\preceq\cN$ is a tame extension.

\begin{prp}\label{prp:main prp}
Let $\delta(x,y)$ be an $\mathcal{L}$-formula, where $x=(x_1,\ldots,x_m)$ and  $y=(y_1,\ldots, y_n)$. Then there is an $\mathcal{L}(M)$-formula $\phi(z,w)$, where $z=(z_1,\dots,z_k)$, and a 
map $\Omega\colon (M^*)^m\to M^k$, definable in the $\mathcal L^*$-structure $(\cN,\cM)$,
such that for all $a\in (M^*)^m$, $b\in M^{n}$:
$$\cM^*\models\delta(a,b)\quad\Longleftrightarrow\quad\cM\models \phi(\Omega(a),b).$$
\end{prp}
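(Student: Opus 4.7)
The plan is to prove the proposition by induction on $m=|x|$. For the base case $m=1$, I would apply uniform cell decomposition in $\cM$ to $\delta(x,y)$ in the variable $x$, obtaining $\cM$-definable continuous functions $f_{1},\ldots,f_{s}\colon M^{n}\to M\cup\{\pm\infty\}$ and an $\cM$-formula $\psi(x,y,z_{1},\ldots,z_{s})$ in which $x$ occurs only in atomic comparisons with the $z_{i}$, such that $\cM\models\delta(x,y)\iff\psi(x,y,f_{1}(y),\ldots,f_{s}(y))$. Since $b\in M^{n}$ forces $f_{i}(b)\in M$, by tameness each comparison $a\lessgtr f_{i}(b)$ reduces to a case analysis involving $\std(a)$ and a flag recording whether $a$ equals, is infinitesimally above, or is infinitesimally below $\std(a)$, together with the extreme cases $a>M$ and $a<M$. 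All of this data is $(\cN,\cM)$-definable from $a$, so $\Omega(a)$ outputs it and $\phi$ is built by rewriting each atomic comparison in $\psi$ as an explicit $\cM$-formula in $\Omega(a)$ and $b$.

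For the inductive step $m\geq 2$, I would apply uniform cell decomposition to $\delta(x',x_{m},y)$ in $x_{m}$ (uniformly in $x'=(x_{1},\ldots,x_{m-1})$ and $y$) to obtain $\cM$-definable functions $g_{1},\ldots,g_{s}\colon M^{m-1}\times M^{n}\to M\cup\{\pm\infty\}$ and an $\cM$-formula $\psi(x_{m},x',y,z_{1},\ldots,z_{s})$ in which $x_{m}$ occurs only in comparisons with the $z_{i}$. The subformulas of $\psi$ not involving $x_{m}$, after substituting $z_{i}=g_{i}(x',y)$, give $\cM$-formulas in $(x',y)$ with only $m-1$ object variables and are handled directly by the inductive hypothesis. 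It remains, for each $i$, to express the sign of $a_{m}-g_{i}^{*}(a',b)$---a comparison of two elements of $M^{*}$---as an $\cM$-formula in $\Omega(a)$ and $b$. For fixed $a_{m}\in M^{*}$, the set $\{(x',y)\in M^{m-1+n}\colon g_{i}(x',y)<a_{m}\}$ is a cut in the $g_{i}$-induced linear preorder on $M^{m-1+n}$ (passed to the quotient by $\sim_{g_{i}}$ via elimination of imaginaries to obtain a genuine linear order), so Proposition~\ref{prp:Janak, consequence, uniform} supplies an $\cM$-definable set $W_{i}$ and an $(\cN,\cM)$-definable map $\Omega_{i}$ with $W_{i,\Omega_{i}(a_{m})}$ equal to this cut. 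Viewing the defining formula of $W_{i}$ as an $\cM$-formula with $m-1$ object variables $x'$ and parameter variables $(y,z)$, another application of the inductive hypothesis produces the required translation of ``$(a',b)\in W_{i,\Omega_{i}(a_{m})}$''; assembling everything across the cell cases and over $i$ yields the desired $\Omega$ and $\phi$ for $\delta$.

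The main obstacle I expect is the compatibility between the $\cM$-definable description of the cut on $M$-tuples and the intended $\cN$-level relation $g_{i}^{*}(a',b)<a_{m}$ when $a'$ is an $M^{*}$-tuple: Proposition~\ref{prp:Janak, consequence, uniform} only certifies agreement on $M^{m-1+n}$, while the $\cN$-interpretation of the formula defining $W_{i}$ can disagree with $\{g_{i}^{*}<a_{m}\}$ precisely in the infinitesimal regime where $g_{i}^{*}(a',b)$ lies arbitrarily close to $\std(a_{m})$. To handle this I would enrich $\Omega(a)$ with additional $(\cN,\cM)$-definable data---such as $\std(g_{i}^{*}(a',b))$ and infinitesimal-direction flags viewed as functions of $b$ for fixed $a'$---and apply the inductive hypothesis to the auxiliary $\cM$-formulas describing their graphs, producing $\cM$-definable descriptions parametrized by $\Omega(a)$ so that the final $\phi$ branches on these flags to dispatch the infinitesimal case separately from the generic one.
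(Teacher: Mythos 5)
Your base case and the initial reduction via cell decomposition in $x_m$ match the paper, but the inductive step has a genuine gap, and it is exactly the one you flag at the end. You put the $g_i$-induced preorder on the full tuple space $M^{m-1+n}$ and invoke Proposition~\ref{prp:Janak, consequence, uniform} to get an $\cM$-definable $W_i$ agreeing with $\{g_i<a_m\}$; but that proposition only pins down $W_i$ on $M$-points of the linear order, whereas the point you need to test, $(a',b)$, is nonstandard in its first $m-1$ coordinates. Evaluating $W_i^*$ there tells you nothing, and your proposed repair --- enriching $\Omega$ with $\std(g_i^*(a',b))$ and infinitesimal-direction flags --- cannot close the gap for two reasons. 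First, deciding $g_i^*(a',b)<a_m$ is a comparison of two elements of $M^*$ that may both lie above $M$, or both lie infinitesimally close to the same element of $M$; no amount of standard-part and direction data about each separately determines the comparison. Second, the functions $b\mapsto\std(g_i^*(a',b))$ and the flags are $(\cN,\cM)$-definable but their graphs are not given by $\mathcal L$-formulas, so the inductive hypothesis (which applies only to $\mathcal L$-formulas in $m-1$ object variables) does not license translating them.

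The missing idea is to put the linear order on the parameter tuples rather than on the object tuples. Fix $\hat a=a'$ and define, for $b_1,b_2\in M^n$, the preorder $b_1\lesssim_{\hat a}b_2\iff f^*(\hat a,b_1)\leq f^*(\hat a,b_2)$. Because $b_1,b_2$ range over \emph{standard} points, the inductive hypothesis applied to the $\mathcal L$-formula $f(x',y_1)\leq f(x',y_2)$ (object variables $x'$ of length $m-1$) shows this preorder is $\cM$-definable with parameters $\Omega_2(\hat a)\in M^l$. Quotienting by the associated equivalence relation (uniform elimination of imaginaries) yields an $\cM$-definable linear order on a subset of $M^n$, and $\{b\in M^n: f^*(\hat a,b)\leq a_m\}$ is then literally the trace on $M$-points of an $\cM^*$-definable set, and that trace is a cut; Proposition~\ref{prp:Janak, consequence} (in its uniform form) applies directly and no $\cM$-definable set ever needs to be evaluated at a nonstandard point. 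This relocation of the nonstandardness into the parameters of the order, rather than into the points of the order, is the step your argument needs.
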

We use induction on $m$.  Lemma~\ref{one-dim, uniform} treats the base case $m=1$. Suppose that $m\geq 2$. Let $\hat{a}=(a_{1},\ldots,a_{m-1})$; inductively, $\tp(\hat{a}|M)$ is definable. We construct a defining formula for the restriction $\tp(a|M)\!\upharpoonright\!\d$ of $\tp(a|M)$ to~$\delta$. It is a direct consequence of the Cell Decomposition Theorem that $\delta(x;y)$ is a boolean combination of formulas $\delta_i(x;y)$
such that the set of tuples $(a,b)=(\hat{a},a_m,b)\in (M^*)^{m-1}\times M^*\times (M^*)^{n}$ 
defined by $\delta_i$ in $\cN$  has one of the following forms:
\begin{enumerate}
\item $(\hat{a},b)\in X^*$ and $a_m\geqslant f^*(\hat{a},b)$,
\item $(\hat{a},b)\in X^*$ and $a_m\leqslant f^*(\hat{a},b)$,
\item $(\hat{a},b)\in X^*$,
\end{enumerate}
where $X\subseteq M^{m+n-1}$ and $f\colon M^{m+n-1}\longrightarrow M$ is definable. The defining formula of $\tp(a|M)\!\upharpoonright\!\delta$ is the corresponding boolean combination of the defining formulas of $\tp(a|M)\!\upharpoonright\!\delta_i$. We therefore assume that $\delta$ is of one these forms. The last case is rendered trivial by the inductive assumption. We now suppose that $\delta$ is of the first form. Thus
$$\cM^*\models\d(a,b)\Longleftrightarrow \text{$(\hat{a},b)\in X^*$ and $f^*(\hat{a},b)\leqslant a_{m}$.}$$
By the induction hypothesis we take a definable $B\subseteq M^{k}\times M^n$, for some~$k$, and map  $\Omega_1\colon (M^*)^{m-1}\longrightarrow M^k$, definable in
the pair $(\cM^*,\cM)$, such that for
$\hat{a}\in (M^{*})^{m-1}$ and $b\in M^n$ we have
$$(\hat{a},b)\in X^* \quad\Longleftrightarrow\quad b\in B_{\Omega_1(\hat{a})}.$$
For
$\hat{a}\in (M^{*})^{m-1}$,
$b_{1},b_{2}\in M^n$ with $(\hat{a},b_i)\in X^*$ ($i=1,2$), we define 
$$b_{1}\lesssim_{\hat{a}} b_{2}\quad :\Longleftrightarrow\quad f^*(\hat{a},b_1)\leqslant f^*(\hat{a},b_2).$$ 
Again, the inductive hypothesis gives a definable $C\subseteq M^l\times (M^n\times M^n)$ and a map $\Omega_2\colon (M^*)^{m-1}\longrightarrow M^l$ which is definable in $(\cN,\cM)$ and such that $b_1\lesssim_{\hat{a}} b_2$ if and only if $(b_1,b_2)\in C_{\Omega_2(\hat{a})}$, for all $\hat{a}\in (M^*)^{m-1}$ and $b_1,b_2\in M^n$. 
It is easy to check that each $\lesssim_{\hat{a}}$ is a quasi-order on $M^{n}$ in which any two elements are comparable. For $b_{1},b_{2}\in M^{n}$  set 
\begin{align*}
b_{1}\sim_{\hat{a}} b_{2}&\quad :\Longleftrightarrow\quad b_{1}\lesssim_{\hat{a}} b_{2}\text{ and }b_{2}\lesssim_{\hat{a}} b_{1} \\
&\quad\ \Longleftrightarrow \quad f^*(\hat{a},b_{1})=f^*(\hat{a},b_{2})\\
&\quad\ \Longleftrightarrow \quad (b_1,b_2), (b_2,b_1)\in C_{\Omega_2(\hat{a})}.
\end{align*}
This is a definable equivalence relation on $M^{n}$. Let 
$$C':=\big\{ (b,b_1,b_2)\in M^l\times [M^n\times M^n]: (b_1,b_2),(b_2,b_1)\in C_b\big\}.$$ 
If $b\in M^l$ is of the form $\Omega_2(\hat{a})$ then $C'_b$ is a definable equivalence relation on~$M^n$. By uniform elimination of imaginaries let $A\subseteq  M^{l}\times M^{n}$ be definable such that for all $b\in M^l$ we have $A_b=M^n/C'_b$ whenever $C'_b$ a definable equivalence relation, and $A_b=\emptyset$  otherwise. So we have that $A_{\Omega_{2}(\hat{a})}=M^{n}/{\sim_{\hat{a}}}$ for all $\hat{a}\in (M^*)^{m-1}$. 
The relation $\lesssim_{\hat{a}}$ pushes forward to a linear order on $A_{\Omega_{2}(\hat{a})}$, which we denote by $\leq_{\hat{a}}$. For $(\hat{a},x)\in  (M^{*})^{m-1} \times M^{*}$ let $V_{(\hat{a},x)}$ be the set of $b\in (M^{*})^{n}$ such that $f^{*}(\hat{a},b)\leqslant x$.   Then $V_{(\hat{a},x)}/{\sim_{\hat{a}}}$ is easily seen to be a cut in the definable linear order $(A_{\Omega_{2}(\hat{a})},{\leq_{\hat{a}}})$, and hence definable (in $\cM)$, by Proposition~\ref{prp:Janak, consequence}. In fact, by Proposition~\ref{prp:Janak, consequence, uniform}, there is a definable $D\subseteq M^p\times A$, for some $p$, and a map $\Omega_3\colon   (M^{*})^{n-1}\times (M^{*})\longrightarrow M^p$,
definable in $(\cN,\cM)$, such that 
$$[b]_{\sim_{\hat{a}}}\in V_{(\hat{a},x)}/{\sim_{\hat{a}}}\quad\Longleftrightarrow\quad b\in D_{\Omega_{3}(\hat{a},x)}.$$ Hence
\begin{align*}
\cM^{*}\models\d(a;b)&\quad\Longleftrightarrow\quad (\hat{a},b)\in X^*\land \big[f^*(\hat{a},b)\leq a_{m}\big]\\ &\quad \Longleftrightarrow \quad \big[b\in B_{\Omega_1(\hat{a})}\big] \land \big[b\in D_{\Omega_3(\hat{a},x)}\big]
\end{align*}
Therefore $\tp(a|M)\!\upharpoonright\!\d$ is definable in the way indicated in the proposition. \qed

\end{document}